\newtheorem{theorem}{Theorem}[section]
\newtheorem*{theorem*}{Theorem}
\newtheorem{lemma}[theorem]{Lemma}
\newtheorem{proposition}[theorem]{Proposition}
\newtheorem*{proposition*}{Proposition}
\newtheorem{corollary}[theorem]{Corollary}
\newtheorem*{corollary*}{Corollary}
\newtheorem{conjecture}[theorem]{Conjecture}
\newtheorem*{conjecture*}{Conjecture}
\newtheorem*{question*}{Question}
\newtheorem*{main:main_1_ground_morse}{Theorem~\ref{thrm:1_ground_morse}}
\newtheorem*{main:main_graphs}{Theorem~\ref{thrm:main_graphs}}
\newtheorem*{main:main_classification}{Theorem~\ref{thrm:classification}}
\theoremstyle{definition}
\newtheorem{definition}[theorem]{Definition}
\newtheorem{example}[theorem]{Example}
\newtheorem{observation}[theorem]{Observation}
\newcommand{\N}{\mathbb{N}}
\newcommand{\R}{\mathbb{R}}
\newcommand{\lk}{\operatorname{lk}}
\newcommand{\st}{\operatorname{st}}
\newcommand{\dlk}{\lk^\downarrow\!}
\newcommand{\dst}{\st^\downarrow\!}
\newcommand{\dflk}{\lk^\downarrow_\partial}
\newcommand{\dclk}{\lk^\downarrow_\delta}
\newcommand{\hasse}{\mathcal{H}}
\newcommand{\morse}{\mathcal{M}}
\newcommand{\genmorse}{\mathcal{GM}}
\newcommand{\defeq}{\mathbin{\vcentcolon =}}
\numberwithin{equation}{section}
\begin{document}

\title{Higher connectivity of the Morse complex}
\date{\today}
\subjclass[2010]{Primary 55U05;   
                 Secondary 57Q05}           

\keywords{Morse complex, higher connectivity, discrete Morse theory}

\author[N.~A.~Scoville]{Nicholas A. Scoville}
\address{Department of Mathematics and Computer Science, Ursinus College, Collegeville, PA 19426}
\email{nscoville@ursinus.edu}

\author[M.~C.~B.~Zaremsky]{Matthew C.~B.~Zaremsky}
\address{Department of Mathematics and Statistics, University at Albany (SUNY), Albany, NY 12222}
\email{mzaremsky@albany.edu}

\begin{abstract}
The Morse complex $\morse(\Delta)$ of a finite simplicial complex $\Delta$ is the complex of all gradient vector fields on $\Delta$. In this paper we study higher connectivity properties of $\morse(\Delta)$. For example, we prove that $\morse(\Delta)$ gets arbitrarily highly connected as the maximum degree of a vertex of $\Delta$ goes to $\infty$, and for $\Delta$ a graph additionally as the number of edges goes to $\infty$. We also classify precisely when $\morse(\Delta)$ is connected or simply connected. Our main tool is Bestvina--Brady Morse theory, applied to a ``generalized Morse complex.''
\end{abstract}

\maketitle
\thispagestyle{empty}

\section*{Introduction}

The Morse complex $\morse(\Delta)$ of a finite simplicial complex $\Delta$ is the simplicial complex of all gradient vector fields on $\Delta$. See Section~\ref{sec:morse_cpx} for a more detailed definition. The Morse complex $\morse(\Delta)$ has several important properties. For example, two connected simplicial complexes are isomorphic if and only if their Morse complexes are isomorphic \cite{capitelli17}. Additionally, outside a few sporadic cases, for connected $\Delta$ the group of automorphisms of $\morse(\Delta)$ is isomorphic to that of $\Delta$ \cite{lin21}. The Morse complex may be viewed as a discrete analog of the space of gradient vector fields on a manifold; see, e.g., \cite{Palis82}.

The homotopy type of $\morse(\Delta)$ is only known for a handful of examples of $\Delta$, and in general it is difficult to compute. In this paper, we relax the question to just asking how highly connected $\morse(\Delta)$ is (meaning up to what bound the homotopy groups vanish). Our first main result is the following:

\begin{main:main_1_ground_morse}
If $\Delta$ has a vertex with degree $d$ in $\Delta^{(1)}$ then $\morse(\Delta)$ is $(d-2)$-connected.
\end{main:main_1_ground_morse}

For example this holds if $\dim(\Delta)\ge d$. It is harder to obtain good higher connectivity bounds when the dimension of $\Delta$ is small and vertices of $\Delta$ have small degrees,  but for certain situations we can. First we focus on the case when $\dim(\Delta)=1$, i.e., $\Delta$ is a graph $\Gamma$. Here we are able to use Bestvina--Brady Morse theory, applied to the so called generalized Morse complex $\genmorse(\Gamma)$, to find higher connectivity bounds for $\morse(\Gamma)$. Let $d(\Gamma)$ be the maximum degree of a vertex in the Hasse diagram. Our main result for graphs is:

\begin{main:main_graphs}
The Morse complex $\morse(\Gamma)$ is $\left(\left\lceil\frac{|E(\Gamma)|}{d(\Gamma)}\right\rceil-2\right)$-connected.
\end{main:main_graphs}

Combining Theorem~\ref{thrm:main_graphs} with Theorem~\ref{thrm:1_ground_morse} quickly shows that, as the number of edges of $\Gamma$ goes to $\infty$, $\morse(\Gamma)$ becomes arbitrarily highly connected (see Corollary~\ref{cor:number_of_edges} for a precise statement). We conjecture that a similar result holds regardless of $\dim(\Delta)$ (Conjecture~\ref{conj:higher}).

Our last main result is a classification of precisely which $\Delta$ have connected and simply connected Morse complexes. Here we assume $\Delta$ has no isolated vertices just to make the statement cleaner (isolated vertices can be deleted without affecting $\morse(\Delta)$).

\begin{main:main_classification}
Suppose $\Delta$ has no isolated vertices. The Morse complex $\morse(\Delta)$ is connected if and only if $\Delta$ is not an edge, and is simply connected if and only if $\Delta$ is none of: an edge, a disjoint union of two edges, a path with three edges, a $3$-cycle, or a $2$-simplex.
\end{main:main_classification}

This paper is organized as follows. In Section~\ref{sec:morse_cpx} we set up the Morse complex $\morse(\Delta)$ and generalized Morse complex $\genmorse(\Delta)$. In Section~\ref{sec:first_hi_conn} we prove Theorem~\ref{thrm:1_ground_morse}. In Section~\ref{sec:bb} we discuss Bestvina--Brady discrete Morse theory and how to apply it to $\genmorse(\Delta)$. In Section~\ref{sec:graphs} we focus on the situation for graphs and prove Theorem~\ref{thrm:main_graphs}. Finally, in Section~\ref{sec:hi_dim} we discuss the situation for $\dim(\Delta)>1$ and prove Theorem~\ref{thrm:classification}.

\subsection*{Acknowledgments} We are grateful to the organizers of the 2019 Union College Mathematics Conference, where the idea for this project originated. We are also very grateful to the anonymous referees, for many helpful comments and in particular for suggestions that led us to proving Theorem~\ref{thrm:1_ground_morse} and its ramifications, which greatly strengthened our results. The second named author is supported by grant \#635763 from the Simons Foundation.

\section{The Morse complex}\label{sec:morse_cpx}

Let $\Delta$ be a finite abstract simplicial complex. We will abuse notation and also write $\Delta$ for the geometric realization of $\Delta$. If $\sigma$ is a $p$-dimensional simplex in $\Delta$, we may write $\sigma^{(p)}$ to indicate the dimension. A \emph{primitive discrete vector field} on $\Delta$ is a pair $(\sigma^{(p)},\tau^{(p+1)})$ for $\sigma<\tau$. A \emph{discrete vector field} $V$ on $\Delta$ is a collection of primitive discrete vector fields
\[
V=\{(\sigma_0,\tau_0),\dots,(\sigma_k,\tau_k)\}
\]
such that each simplex of $\Delta$ is in at most one pair $(\sigma_i,\tau_i)$. If the two simplices in $(\sigma,\tau)$ are distinct from the two simplices in $(\sigma',\tau')$, call the primitive discrete vector fields $(\sigma,\tau)$ and $(\sigma',\tau')$ \emph{compatible}; in particular a discrete vector field is a set of pairwise compatible primitive discrete vector fields.

The \emph{Hasse diagram} of $\Delta$ is the simple graph $\hasse(\Delta)$ with a vertex for each (non-empty) simplex of $\Delta$ and an edge between any pair of simplices such that one is a codimension-$1$ face of the other. In particular the primitive discrete vector fields on $\Delta$ are in one-to-one correspondence with the edges of $\hasse(\Delta)$. Also, the discrete vector fields on $\Delta$ are in one-to-one correspondence with the \emph{matchings}, i.e., the collections of pairwise disjoint edges, on $\hasse(\Delta)$. We will sometimes equivocate between a discrete vector field on $\Delta$ and its corresponding matching on $\hasse(\Delta)$.

\begin{definition}[Generalized Morse complex]
The \emph{generalized Morse complex} $\genmorse(\Delta)$ of $\Delta$ is the simplicial complex whose vertices are the primitive discrete vector fields on $\Delta$, with a finite collection of vertices spanning a simplex whenever the primitive discrete vector fields are pairwise compatible. Said another way, the simplices of $\genmorse(\Delta)$ are the discrete vector fields on $\Delta$, with face relation given by inclusion.
\end{definition}

Note that $\genmorse(\Delta)$ is a flag complex, i.e., if a finite collection of vertices pairwise span edges then they span a simplex, which makes it comparatively easy to analyze. Viewed in terms of matchings on $\hasse(\Delta)$, $\genmorse(\Delta)$ is precisely the \emph{matching complex} of $\hasse(\Delta)$, i.e., the simplicial complex of matchings with face relation given by inclusion. Matching complexes of graphs are well studied; see \cite{bayer20} for an especially extensive list of references.

The Morse complex $\morse(\Delta)$ of $\Delta$, introduced by Chari and Joswig in \cite{chari05}, is the subcomplex of $\genmorse(\Delta)$ consisting of all discrete vector fields arising from a Forman discrete Morse function, or equivalently all acyclic discrete vector fields. To define all this, we need some setup, which we draw mostly from \cite[Section~2.2]{scoville19}. Given a discrete vector field $V$ on $\Delta$, a \emph{$V$-path} is a sequence of simplices
\[
\sigma_0^{(p)}, \tau_0^{(p+1)}, \sigma_1^{(p)}, \tau_1^{(p+1)}, \sigma_2^{(p)}, \dots , \tau_{m-1}^{(p+1)}, \sigma_m^{(p)}
\]
such that for each $0\le i\le m-1$, $(\sigma_i,\tau_i)\in V$ and $\tau_i>\sigma_{i+1}\ne \sigma_i$. Such a $V$-path is \emph{non-trivial} if $m>0$, and \emph{closed} if $\sigma_m=\sigma_0$. A closed non-trivial $V$-path is called a \emph{$V$-cycle}. If there exist no $V$-cycles, call $V$ \emph{acyclic}. A $V$-cycle is \emph{simple} if $\sigma_0,\dots,\sigma_{m-1}$ are pairwise distinct and $\tau_0,\dots,\tau_{m-1}$ are pairwise distinct. We will identify $V$-cycles up to cyclic permutation, e.g., we consider $\sigma_0,\tau_0,\sigma_1,\dots,\tau_{m-1},\sigma_0$ to be the same cycle as $\sigma_1,\tau_1,\sigma_2,\dots,\tau_{m-1},\sigma_0,\tau_0,\sigma_1$, and so forth.

Every acyclic discrete vector field on $\Delta$ is the gradient vector field of a Forman discrete Morse function on $\Delta$. A \emph{Forman discrete Morse function} on $\Delta$ (developed by Forman in \cite{forman98}) is a function $h\colon \Delta\to\R$ such that for every $\sigma^{(p)}$, there is at most one $\tau^{(p+1)}>\sigma^{(p)}$ with $h(\tau)\le h(\sigma)$, and for every $\tau^{(p+1)}$ there is at most one $\sigma^{(p)}<\tau^{(p+1)}$ with $h(\sigma)\ge h(\tau)$. The \emph{gradient vector field} of $h$ is the discrete vector field whose primitive vector fields are all the $(\sigma^{(p)},\tau^{(p+1)})$ with $h(\sigma)\ge h(\tau)$. A discrete vector field is the gradient vector field of some Forman discrete Morse function if and only if it is acyclic \cite[Theorem 2.51]{scoville19}.

\begin{definition}[Morse complex]
The subcomplex $\morse(\Delta)$ of $\genmorse(\Delta)$ consisting of all acyclic $V$ is the \emph{Morse complex} of $\Delta$.
\end{definition}

Note that any subset of an acyclic discrete vector field is itself acyclic, so $\morse(\Delta)$ really is a subcomplex. We should remark that the term ``Morse complex'' also means a certain algebraic chain complex obtained from an acyclic matching, e.g., see \cite[Definition 11.23]{kozlov08}, but in this paper ``Morse complex'' will always mean $\morse(\Delta)$.

The following observation will be important later when relating $\morse(\Delta)$ and $\genmorse(\Delta)$.

\begin{observation}[1-skeleton]\label{obs:1-skel}
The $1$-skeleton of $\morse(\Delta)$ coincides with that of $\genmorse(\Delta)$.
\end{observation}

\begin{proof}
Since $\Delta$ is simplicial, fewer than three compatible primitive discrete vector fields cannot form a cycle.
\end{proof}

Let us discuss two examples that are instructive and will be specifically relevant later.

\begin{example}\label{ex:3_cycle}
Let $\Delta=C_3$ be the $3$-cycle, i.e, the cyclic graph with $3$ vertices. See Figure~\ref{fig:3_cycle} for drawings of $\hasse(C_3)$, $\genmorse(C_3)$, and $\morse(C_3)$. We see that $\genmorse(C_3)\simeq S^1\vee S^1$ and $\morse(C_3)\simeq S^1\vee S^1\vee S^1\vee S^1$ (this computation of $\morse(C_3)$ agrees with Kozlov's computation in \cite[Proposition~5.2]{kozlov99}). In particular neither $\morse(C_3)$ nor $\genmorse(C_3)$ is simply connected.
\end{example}

\begin{figure}[htb]
\centering
\begin{tikzpicture}[scale=.7]

\coordinate (z) at (0,0);
\coordinate (y) at (2,0);
\coordinate (x) at (4,0);
\coordinate (w) at (3,1.732);
\coordinate (v) at (2,3.464);
\coordinate (u) at (1,1.732);

\draw[line width=1] (z) -- (x) -- (v) -- (z);

\filldraw (z) circle (1.5pt);
\filldraw (y) circle (1.5pt);
\filldraw (x) circle (1.5pt);
\filldraw (w) circle (1.5pt);
\filldraw (v) circle (1.5pt);
\filldraw (u) circle (1.5pt);

\node at ($.5*(z)+.5*(u)+(-.25,.15)$) {$a$};
\node at ($.5*(u)+.5*(v)+(-.25,.15)$) {$b$};
\node at ($.5*(v)+.5*(w)+(.25,.15)$) {$c$};
\node at ($.5*(w)+.5*(x)+(.25,.15)$) {$d$};
\node at ($.5*(x)+.5*(y)+(0,-.3)$) {$e$};
\node at ($.5*(y)+.5*(z)+(0,-.3)$) {$f$};

\begin{scope}[xshift=6cm,yshift=4cm]
\coordinate (a) at (0,0);
\coordinate (c) at (1.732,-1);
\coordinate (e) at (0,-2);
\coordinate (f) at (3.732,-1);
\coordinate (d) at (5.464,0);
\coordinate (b) at (5.464,-2);

\filldraw[lightgray] (a) -- (c) -- (e);
\filldraw[lightgray] (f) -- (d) -- (b);
\draw[line width=1] (c) -- (e) -- (a) -- (c) -- (f) -- (d) -- (b) -- (f);
\draw[line width=1] (a) -- (d)   (e) -- (b);

\filldraw (a) circle (2pt);
\filldraw (b) circle (2pt);
\filldraw (c) circle (2pt);
\filldraw (d) circle (2pt);
\filldraw (e) circle (2pt);
\filldraw (f) circle (2pt);

\node at ($(a)-(.3,0)$) {$a$};
\node at ($(e)-(.3,0)$) {$e$};
\node at ($(c)+(0,.3)$) {$c$};
\node at ($(f)+(0,.4)$) {$f$};
\node at ($(d)+(.3,0)$) {$d$};
\node at ($(b)+(.3,0)$) {$b$};
\end{scope}

\begin{scope}[xshift=6cm,yshift=1cm]
\coordinate (a) at (0,0);
\coordinate (c) at (1.732,-1);
\coordinate (e) at (0,-2);
\coordinate (f) at (3.732,-1);
\coordinate (d) at (5.464,0);
\coordinate (b) at (5.464,-2);

\draw[line width=1] (c) -- (e) -- (a) -- (c) -- (f) -- (d) -- (b) -- (f);
\draw[line width=1] (a) -- (d)   (e) -- (b);

\filldraw (a) circle (2pt);
\filldraw (b) circle (2pt);
\filldraw (c) circle (2pt);
\filldraw (d) circle (2pt);
\filldraw (e) circle (2pt);
\filldraw (f) circle (2pt);

\node at ($(a)-(.3,0)$) {$a$};
\node at ($(e)-(.3,0)$) {$e$};
\node at ($(c)+(0,.3)$) {$c$};
\node at ($(f)+(0,.4)$) {$f$};
\node at ($(d)+(.3,0)$) {$d$};
\node at ($(b)+(.3,0)$) {$b$};
\end{scope}

\end{tikzpicture}
\caption{The Hasse diagram $\hasse(C_3)$ (left), the generalized Morse complex $\genmorse(C_3)$ (top), and the Morse complex $\morse(C_3)$ (bottom).}
\label{fig:3_cycle}
\end{figure}

\begin{example}\label{ex:2_spx}
Let $\Delta=\Delta^2$ be the $2$-simplex. See Figure~\ref{fig:2_spx} for drawings of $\hasse(\Delta^2)$, $\genmorse(\Delta^2)$, and $\morse(\Delta^2)$. We see that $\genmorse(\Delta^2)\simeq S^1\vee S^1$ and $\morse(\Delta^2)\simeq S^1\vee S^1\vee S^1\vee S^1$ (this computation of $\morse(\Delta^2)$ agrees with Chari and Joswig's \cite[Proposition~5.1]{chari05}). In particular neither $\morse(\Delta^2)$ nor $\genmorse(\Delta^2)$ is simply connected.
\end{example}

\begin{figure}[H]
\centering
\begin{tikzpicture}[scale=.8]

\coordinate (z) at (0,0);
\coordinate (y) at (2,0);
\coordinate (x) at (4,0);
\coordinate (w) at (3,1.732);
\coordinate (v) at (2,3.464);
\coordinate (u) at (1,1.732);
\coordinate (t) at (2,1.155);

\draw[line width=1] (z) -- (x) -- (v) -- (z);
\draw[line width=1] (y) -- (t) -- (u)   (t) -- (w);

\filldraw (z) circle (1.5pt);
\filldraw (y) circle (1.5pt);
\filldraw (x) circle (1.5pt);
\filldraw (w) circle (1.5pt);
\filldraw (v) circle (1.5pt);
\filldraw (u) circle (1.5pt);
\filldraw (t) circle (1.5pt);

\node at ($.5*(z)+.5*(u)+(-.25,.15)$) {$a$};
\node at ($.5*(u)+.5*(v)+(-.25,.15)$) {$b$};
\node at ($.5*(v)+.5*(w)+(.25,.15)$) {$c$};
\node at ($.5*(w)+.5*(x)+(.25,.15)$) {$d$};
\node at ($.5*(x)+.5*(y)+(0,-.3)$) {$e$};
\node at ($.5*(y)+.5*(z)+(0,-.3)$) {$f$};
\node at ($.5*(u)+.5*(t)+(.2,.15)$) {$g$};
\node at ($.5*(w)+.5*(t)+(-.2,.17)$) {$h$};
\node at ($.5*(y)+.5*(t)+(.2,0)$) {$i$};

\begin{scope}[xshift=6cm,yshift=5cm]
\coordinate (a) at (0,0);
\coordinate (c) at (1.732,-1);
\coordinate (e) at (0,-2);
\coordinate (f) at (3.732,-1);
\coordinate (d) at (5.464,0);
\coordinate (b) at (5.464,-2);
\coordinate (g) at ($.5*(c)+.5*(f)+(0,.2)$);
\coordinate (h) at ($.5*(e)+.5*(b)+(-1,-1)$);
\coordinate (i) at ($.5*(a)+.5*(d)+(1,1)$);

\filldraw[lightgray] (a) -- (c) -- (e);
\draw[line width=1] (c) -- (e) -- (a) -- (c);
\filldraw[lightgray] (f) -- (d) -- (b);
\draw[line width=1] (f) -- (d) -- (b) -- (f);

\filldraw[lightgray] (f) -- (g) -- (d);
\draw[line width=1] (f) -- (g) -- (d) -- (f);
\filldraw[lightgray] (f) -- (g) -- (c);
\draw[line width=1] (f) -- (g) -- (c) -- (f);
\filldraw[lightgray] (e) -- (g) -- (c);
\draw[line width=1] (e) -- (g) -- (c) -- (e);

\filldraw[lightgray] (f) -- (b) -- (h);
\draw[line width=1] (f) -- (b) -- (h) -- (f);
\filldraw[lightgray] (e) -- (b) -- (h);
\draw[line width=1] (e) -- (b) -- (h) -- (e);
\filldraw[lightgray] (e) -- (a) -- (h);
\draw[line width=1] (e) -- (a) -- (h) -- (e);

\filldraw[lightgray] (c) -- (a) -- (i);
\draw[line width=1] (c) -- (a) -- (i) -- (c);
\filldraw[lightgray] (d) -- (a) -- (i);
\draw[line width=1] (d) -- (a) -- (i) -- (d);
\filldraw[lightgray] (d) -- (b) -- (i);
\draw[line width=1] (d) -- (b) -- (i) -- (d);

\filldraw (a) circle (2pt);
\filldraw (b) circle (2pt);
\filldraw (c) circle (2pt);
\filldraw (d) circle (2pt);
\filldraw (e) circle (2pt);
\filldraw (f) circle (2pt);
\filldraw (g) circle (2pt);
\filldraw (h) circle (2pt);
\filldraw (i) circle (2pt);

\node at ($(a)-(.3,0)$) {$a$};
\node at ($(e)-(.3,0)$) {$e$};
\node at ($(c)+(0,.3)$) {$c$};
\node at ($(f)+(0,-.4)$) {$f$};
\node at ($(d)+(.3,0)$) {$d$};
\node at ($(b)+(.3,0)$) {$b$};
\node at ($(g)+(0,.3)$) {$g$};
\node at ($(h)+(0,-.3)$) {$h$};
\node at ($(i)+(0,.3)$) {$i$};
\end{scope}

\begin{scope}[xshift=6cm,yshift=0cm]
\coordinate (a) at (0,0);
\coordinate (c) at (1.732,-1);
\coordinate (e) at (0,-2);
\coordinate (f) at (3.732,-1);
\coordinate (d) at (5.464,0);
\coordinate (b) at (5.464,-2);
\coordinate (g) at ($.5*(c)+.5*(f)+(0,.2)$);
\coordinate (h) at ($.5*(e)+.5*(b)+(-1,-1)$);
\coordinate (i) at ($.5*(a)+.5*(d)+(1,1)$);

\draw[line width=1] (c) -- (e) -- (a) -- (c);
\draw[line width=1] (f) -- (d) -- (b) -- (f);

\filldraw[lightgray] (f) -- (g) -- (d);
\draw[line width=1] (f) -- (g) -- (d) -- (f);
\filldraw[lightgray] (f) -- (g) -- (c);
\draw[line width=1] (f) -- (g) -- (c) -- (f);
\filldraw[lightgray] (e) -- (g) -- (c);
\draw[line width=1] (e) -- (g) -- (c) -- (e);

\filldraw[lightgray] (f) -- (b) -- (h);
\draw[line width=1] (f) -- (b) -- (h) -- (f);
\filldraw[lightgray] (e) -- (b) -- (h);
\draw[line width=1] (e) -- (b) -- (h) -- (e);
\filldraw[lightgray] (e) -- (a) -- (h);
\draw[line width=1] (e) -- (a) -- (h) -- (e);

\filldraw[lightgray] (c) -- (a) -- (i);
\draw[line width=1] (c) -- (a) -- (i) -- (c);
\filldraw[lightgray] (d) -- (a) -- (i);
\draw[line width=1] (d) -- (a) -- (i) -- (d);
\filldraw[lightgray] (d) -- (b) -- (i);
\draw[line width=1] (d) -- (b) -- (i) -- (d);

\filldraw (a) circle (2pt);
\filldraw (b) circle (2pt);
\filldraw (c) circle (2pt);
\filldraw (d) circle (2pt);
\filldraw (e) circle (2pt);
\filldraw (f) circle (2pt);
\filldraw (g) circle (2pt);
\filldraw (h) circle (2pt);
\filldraw (i) circle (2pt);

\node at ($(a)-(.3,0)$) {$a$};
\node at ($(e)-(.3,0)$) {$e$};
\node at ($(c)+(0,.3)$) {$c$};
\node at ($(f)+(0,-.4)$) {$f$};
\node at ($(d)+(.3,0)$) {$d$};
\node at ($(b)+(.3,0)$) {$b$};
\node at ($(g)+(0,.3)$) {$g$};
\node at ($(h)+(0,-.3)$) {$h$};
\node at ($(i)+(0,.3)$) {$i$};
\end{scope}

\end{tikzpicture}
\caption{The Hasse diagram $\hasse(\Delta^2)$ (left), the generalized Morse complex $\genmorse(\Delta^2)$ (top), and the Morse complex $\morse(\Delta^2)$ (bottom).}
\label{fig:2_spx}
\end{figure}

It will become necessary later to consider the following generalization of $\genmorse(\Delta)$ and $\morse(\Delta)$, in which certain simplices are ``illegal'' and cannot be used. Specifically, this will be needed in the proof of Proposition~\ref{prop:main_gen_graphs} to get an inductive argument to work.

\begin{definition}[Relative (generalized) Morse complex]
Let $\Omega$ be a subset of the set of simplices of $\Delta$. The \emph{relative generalized Morse complex} $\genmorse(\Delta,\Omega)$ is the full subcomplex of $\genmorse(\Delta)$ spanned by those vertices, i.e., primitive discrete vector fields $(\sigma,\tau)$, such that $\sigma,\tau\not\in \Omega$. The \emph{relative Morse complex} $\morse(\Delta,\Omega)$ is the subcomplex $\morse(\Delta)\cap \genmorse(\Delta,\Omega)$.
\end{definition}

We can also phrase things using $\hasse(\Delta)$.

\begin{definition}[Relative Hasse diagram]
The \emph{relative Hasse diagram} $\hasse(\Delta,\Omega)$ is the induced subgraph of $\hasse(\Delta)$ with vertex set given by all simplices of $\Delta$ not in $\Omega$.
\end{definition}

If we view $\genmorse(\Delta)$ as the matching complex of $\hasse(\Delta)$, then clearly $\genmorse(\Delta,\Omega)$ is the matching complex of $\hasse(\Delta,\Omega)$.

\section{First results on higher connectivity}\label{sec:first_hi_conn}

In this section we establish some higher connectivity bounds for the various complexes in question. In subsequent sections we will use Bestvina--Brady Morse theory to obtain more sophisticated higher connectivity bounds in certain cases. First we focus on the relative generalized Morse complex $\genmorse(\Delta,\Omega)$. We will use the ``Belk--Forrest groundedness trick,'' introduced by Belk and Forrest in \cite{belk19}.

\begin{definition}[Ground, grounded]
Call a simplex in a simplicial complex an \emph{$r$-ground} if every vertex of the complex is adjacent to all but at most $r$ vertices of the simplex. The complex is \emph{$(k,r)$-grounded} if it admits a $k$-simplex that is an $r$-ground.
\end{definition}

\begin{theorem}[Groundedness trick]\cite[Theorem~4.9]{belk19}\label{thrm:ground}
Every $(k,r)$-grounded flag complex is $\left (\left \lceil\frac{k+1}{r}\right \rceil-2\right )$-connected.
\end{theorem}

Note that in \cite[Theorem~4.9]{belk19} the complex is assumed to be finite and $k,r$ are assumed to be at least $1$, but this is not necessary: see, e.g., \cite[Remark~4.12]{skipper19}. Also note that in these references the bound is written $\left \lfloor\frac{k}{r}\right \rfloor-1$, but this equals $\left \lceil\frac{k+1}{r}\right \rceil-2$, and this form will be notationally convenient for us later.

In $\genmorse(\Delta,\Omega)$ it is clear that every $k$-simplex is a $(k,2)$-ground. This is because any primitive discrete vector field only ``uses'' two simplices of $\Delta$, and so can fail to be compatible with at most two vertices of a given simplex. In particular this shows:

\begin{observation}\label{obs:ground}
If $\genmorse(\Delta,\Omega)$ contains a $k$-simplex then it is $\left(\left \lceil\frac{k+1}{2}\right \rceil-2\right)$-connected.\qed
\end{observation}

Note that this only works because $\genmorse(\Delta,\Omega)$ is a flag complex, and in particular Theorem~\ref{thrm:ground} does not apply to $\morse(\Delta,\Omega)$.

This next result will be useful later when using Bestvina--Brady Morse theory and inductive arguments. Let $h(\Delta,\Omega)$ be the number of edges in $\hasse(\Delta,\Omega)$, and let $d(\Delta,\Omega)$ be the maximum degree of a vertex in $\hasse(\Delta,\Omega)$.

\begin{proposition}\label{prop:genmorse_hi_conn}
The complex $\genmorse(\Delta,\Omega)$ is $\left(\left\lceil\frac{h(\Delta,\Omega)}{2d(\Delta,\Omega)}\right\rceil-2\right)$-connected.
\end{proposition}

\begin{proof}
We first claim that $\genmorse(\Delta,\Omega)$ contains a simplex of dimension $\left\lceil\frac{h(\Delta,\Omega)}{d(\Delta,\Omega)}\right\rceil-1$. A $k$-simplex in $\genmorse(\Delta,\Omega)$ consists of $k+1$ pairwise disjoint edges in $\hasse(\Delta,\Omega)$, so we need to show that $\hasse(\Delta,\Omega)$ admits $\left\lceil\frac{h(\Delta,\Omega)}{d(\Delta,\Omega)}\right\rceil$ pairwise disjoint edges. Since $\hasse(\Delta,\Omega)$ is a simple bipartite graph, by K\H{o}nig's Theorem it suffices to show that every vertex cover of $\hasse(\Delta,\Omega)$ has at least $\left\lceil\frac{h(\Delta,\Omega)}{d(\Delta,\Omega)}\right\rceil$ vertices. (Here a \emph{vertex cover} is a subset $S$ of the vertex set such that every edge is incident to at least one element of $S$.) Indeed for any graph $\Theta$, if $S$ is a vertex cover of $\Theta$ then
\[
|S|\max\{\deg(v)\mid v\in V(\Theta)\}  \ge \sum_{v\in S}\deg(v) \ge |E(\Theta)| \text{,}
\]
 and we have $|E(\hasse(\Delta,\Omega))|=h(\Delta,\Omega)$ and $\max\{\deg(v)\mid v\in V(\hasse(\Delta,\Omega))\}=d(\Delta,\Omega)$, so this follows.

Now set $k=\left\lceil\frac{h(\Delta,\Omega)}{d(\Delta,\Omega)}\right\rceil-1$, so we have shown that $\genmorse(\Delta,\Omega)$ contains a simplex of dimension $k$. Then $\genmorse(\Delta,\Omega)$ is $(k,2)$-grounded, and is flag, so Theorem~\ref{thrm:ground} implies that $\genmorse(\Delta,\Omega)$ is $(\lceil\frac{k+1}{2}\rceil-2)$-connected, hence $\left(\left\lceil\frac{h(\Delta,\Omega)}{2d(\Delta,\Omega)}\right\rceil-2\right)$-connected.
\end{proof}

Note that in Proposition~\ref{prop:genmorse_hi_conn} we obtain a higher connectivity bound that is better when the maximal degree $d(\Delta,\Omega)$ is small. We can also find a better, higher connectivity bound when the maximal degree is large. We will only need this in the $\Omega=\emptyset$ case (since we will not need to use Morse theory or induction later), so for simplicity we will only phrase it in that case, but one could state an analog when $\Omega\ne\emptyset$.

\begin{lemma}\label{lem:1_ground}
Suppose $\Delta$ has a vertex that has degree $d$ in $\Delta^{(1)}$. Then $\genmorse(\Delta)$ is $(d-1,1)$-grounded, and hence $(d-2)$-connected.
\end{lemma}

\begin{proof}
Let $u$ be a vertex of degree $d$ in $\Delta^{(1)}$. Let $V$ be the $(d-1)$-simplex $\{(v_1,e_1),\dots,(v_d,e_d)\}$ with each $e_i$ an edge incident to $u$ and each $v_i$ the endpoint of $e_i$ not equal to $u$. We claim that $V$ is a $(d-1,1)$-ground. Indeed, for $i\ne j$ we have that $v_i$ is not incident to $e_j$, so if $(\sigma,\tau)$ is an arbitrary vertex in $\genmorse(\Delta)$ then $\{\sigma,\tau\}$ can intersect $\{v_i,e_i\}$ for at most one $i$. This shows that $\genmorse(\Delta)$ is $(d-1,1)$-grounded, and since it is flag Theorem~\ref{thrm:ground} says it is $(d-2)$-connected.
\end{proof}

Of course the actual goal of this paper is to find higher connectivity results for $\morse(\Delta)$. Even though $\morse(\Delta)$ is not flag, and so the groundedness trick does not apply, we can still prove the analog of Lemma~\ref{lem:1_ground} for $\morse(\Delta)$ using a more complicated argument. First let us record an easy lemma that will be important in many arguments that follow.

\begin{lemma}\label{lem:one_cycle}
Let $(v^{(0)},e^{(1)})$ be a primitive discrete vector field in a discrete vector field $V$ on $\Delta$. Then $(v,e)$ lies in at most one simple $V$-cycle.
\end{lemma}

\begin{proof}
Let $v'$ be the endpoint of $e$ not equal to $v$. Assume that $(v,e)$ lies in a simple $V$-cycle. Then $v'$ must be matched in $V$ to some edge $e'$. Since $v'$ cannot be matched in $V$ to more than one edge, $e'$ is the unique edge with $(v',e')\in V$. Hence every $V$-cycle containing $(v,e)$ also contains $(v',e')$. Repeating this argument, we see that if $(v,e)$ lies in a simple $V$-cycle this simple $V$-cycle is unique.
\end{proof}

Note that the analog of Lemma~\ref{lem:one_cycle} is not true for $(\sigma^{(p)},\tau^{(p+1)})$ with $p>0$, since then $\tau$ can have more than two codimension-$1$ faces.

Recall that the \emph{star} $\st_X(\sigma)$ of a simplex $\sigma$ in a simplicial complex $X$ is the subcomplex of all simplices containing $\sigma$ along with their faces. Let us say two simplices of $X$ are \emph{joinable (in $X$)} if they lie in a common simplex in $X$, or equivalently if they lie in each other's stars.

\begin{theorem}\label{thrm:1_ground_morse}
If $\Delta$ has a vertex with degree $d$ in $\Delta^{(1)}$ then $\morse(\Delta)$ is $(d-2)$-connected.
\end{theorem}

\begin{proof}
As in the proof of Lemma~\ref{lem:1_ground}, let $u$ be a vertex of degree $d$ in $\Delta^{(1)}$. Let $V$ be the $(d-1)$-simplex $\{(v_1,e_1),\dots,(v_d,e_d)\}$ with each $e_i$ an edge incident to $u$ and each $v_i$ the endpoint of $e_i$ not equal to $u$. Note that $V$ is acyclic, hence a simplex in $\morse(\Delta)$, since every $e_i$ has $u$ as its endpoint different than $v_i$. We first claim that the union of stars
\[
\bigcup\limits_{i=1}^d \st_{\morse(\Delta)}(v_i,e_i)
\]
is contractible. Since stars are contractible, and these stars all intersect, e.g., they all contain $V$, it suffices by the Nerve Lemma \cite[Lemma~1.2]{bjoerner94} to show that the intersection of the stars of any subcollection of the $(v_i,e_i)$ is contractible. We claim that for any face $V'$ of $V$, we have
\[
\bigcap\limits_{(v_i,e_i)\in V'}\st_{\morse(\Delta)}(v_i,e_i)=\st_{\morse(\Delta)}(V') \text{.}
\]
This will prove the claim since $\st_{\morse(\Delta)}(V')$ is contractible. The reverse inclusion holds trivially, so we need to prove the forward inclusion. Let $W$ be a simplex in $\morse(\Delta)$ that is joinable in $\morse(\Delta)$ to $(v_i,e_i)$ for each vertex $(v_i,e_i)$ in $V'$. Since $\genmorse(\Delta)$ is flag, this implies $W$ and $V'$ span a simplex $W\cup V'$ in $\genmorse(\Delta)$, and we need to show that $W\cup V'$ is acyclic. Any simple cycle in $W\cup V'$ can contain at most one $(v_i,e_i)$, since every $e_i$ has $u$ as its endpoint, $u\neq v_i$. Since $W$ is joinable in $\morse(\Delta)$ to each vertex of $V'$, no such cycles can exist. We conclude that $W$ lies in $\st_{\morse(\Delta)}(V')$, so $\bigcup\limits_{i=1}^d \st_{\morse(\Delta)}(v_i,e_i)$ is contractible.

Now we claim that this union contains the $(d-2)$-skeleton of $\morse(\Delta)$. Let $U$ be a $(d-2)$-simplex in $\morse(\Delta)$. Since $\genmorse(\Delta)$ is a $(d-1,1)$-grounded flag complex, with $(d-1,1)$-ground $V$ by the proof of Lemma~\ref{lem:1_ground}, every vertex of $U$ is adjacent (in $\genmorse(\Delta)$) to all but at most one vertex of $V$. Since $V$ has $d$ vertices and $U$ has $d-1$ vertices, this implies there exists a vertex $(v_i,e_i)$ of $V$ such that every vertex of $U$ is compatible with $(v_i,e_i)$. Since $\genmorse(\Delta)$ is flag, this shows $U$ lies in $\st_{\genmorse(\Delta)}(v_i,e_i)$. As a first case, suppose $U$ lies in more than one such star, say $\st_{\genmorse(\Delta)}(v_1,e_1)$ and $\st_{\genmorse(\Delta)}(v_2,e_2)$. If $U$ does not lie in $\st_{\morse(\Delta)}(v_1,e_1)$ then the discrete vector field $U\sqcup \{(v_1,e_1)\}$ has a cycle. Since $U$ has no cycles, this means there is a cycle in $U\sqcup \{(v_1,e_1)\}$ containing $(v_1,e_1)$. This cycle necessarily contains a primitive discrete vector field of the form $(u,e_j)$ for some $j$. Since $(u,e_j)$ can lie in at most one simple cycle in the discrete vector field $U\cup \{(v_1,e_1),(v_2,e_2)\}$, by Lemma~\ref{lem:one_cycle}, and since $(v_1,e_1)$ and $(v_2,e_2)$ cannot lie in a common cycle, we conclude that $(v_2,e_2)$ lies in no cycles in $U\cup \{(v_1,e_1),(v_2,e_2)\}$. In particular $(v_2,e_2)$ lies in no cycles in $U\cup \{(v_2,e_2)\}$, so $U$ is in $\st_{\morse(\Delta)}(v_2,e_2)$, which finishes this case.

For the other case, suppose $U$ only lies in one $\st_{\genmorse(\Delta)}(v_i,e_i)$, say without loss of generality in $\st_{\genmorse(\Delta)}(v_1,e_1)$. Then for every $2\le i\le d$, $U$ has a vertex $P_i$ that is a primitive discrete vector field incompatible with $(v_i,e_i)$, i.e., $P_i$ contains either $v_i$ or $e_i$ but not both. Since no $v_i$ is incident to any $e_j$ for $i\ne j$, the function $i\mapsto P_i$ must be injective, so $P_2,\dots,P_d$ are precisely the $d-1$ vertices of $U$. If $P_i\ne (u,e_i)$ for any $i$, then $U\sqcup\{(v_1,e_1)\}$ cannot contain a cycle. Hence suppose, without loss of generality, that $P_2=(u,e_2)$. Then for each $i\ge 3$, $P_i$ either contains $v_i$ or is of the form $(e_i,f_i)$ for some $2$-simplex $f_i$. In particular no $P_i$ contains $v_2$. Hence no cycle in $U\sqcup\{(v_1,e_1)\}$ can contain $(u,e_2)$, which implies no cycle in $U\sqcup\{(v_1,e_1)\}$ can contain $(v_1,e_1)$, which implies there are no cycles. We conclude $U$ is in $\st_{\morse(\Delta)}(v_1,e_1)$, which finishes this case.

We have shown that $\morse(\Delta)^{(d-2)}$ lies in a contractible subcomplex of $\morse(\Delta)$. Hence the inclusion $\morse(\Delta)^{(d-2)}\to \morse(\Delta)$ induces the trivial map in all homotopy groups. We also know this map induces a surjection in all $\pi_k$ for $k\le d-2$, so $\morse(\Delta)$ is $(d-2)$-connected.
\end{proof}

It seems much more difficult to prove the analog of Proposition~\ref{prop:genmorse_hi_conn} for $\morse(\Delta)$, but we conjecture that it holds. Let us record this here (with $\Omega=\emptyset$ for simplicity):

\begin{conjecture}\label{conj:higher}
The Morse complex $\morse(\Delta)$ is $\left(\left\lceil\frac{h(\Delta)}{2d(\Delta)}\right\rceil-2\right)$-connected. In particular if $h(\Delta)>2m\cdot d(\Delta)$ then $\morse(\Delta)$ is $(m-1)$-connected.
\end{conjecture}

In the following sections we will use Bestvina--Brady Morse theory to prove this conjecture in the case when $\dim(\Delta)=1$, i.e., for graphs, and for the special cases $m=1,2$ regardless of $\dim(\Delta)$.

\section{Bestvina--Brady Morse theory}\label{sec:bb}

An important tool we will use now is Bestvina--Brady discrete Morse theory. This is related to Forman's discrete Morse theory, and in fact can be viewed as a generalization of it, as explained in \cite{zaremsky}. For our purposes the definition of a Bestvina--Brady discrete Morse function is as follows. (This is a special case of the situation considered in \cite{zaremsky}.)

\begin{definition}\label{def:bb}
Let $X$ be a simplicial complex and $\phi,\psi\colon X^{(0)}\to\R$ two functions such that for any adjacent vertices $x,y\in X^{(0)}$ we have $(\phi,\psi)(x)\ne (\phi,\psi)(y)$. Extend $\phi$ and $\psi$ to maps $X\to\R$ by extending affinely to each simplex. Then we call
\[
(\phi,\psi)\colon X\to \R\times\R
\]
a \emph{Bestvina--Brady discrete Morse function} provided the following holds: for any infinite sequence $x_1,x_2,\dots$ of vertices such that for each $i$, $x_i$ is adjacent to $x_{i+1}$ and $(\phi,\psi)(x_i)>(\phi,\psi)(x_{i+1})$ lexicographically, the set $\{\phi(x_1),\phi(x_2),\dots\}$ has no lower bound in $\R$.
\end{definition}

Note that if $X$ is finite, as it will be in our forthcoming applications, then this condition about infinite sequences holds vacuously, but for now we will continue working in full generality.

Definition~\ref{def:bb} is a bit unwieldy, but we will only need the following special case:

\begin{example}\label{ex:bcsd}
Let $X=Y'$ be the barycentric subdivision of a simplicial complex $Y$, so the vertices of $X$ are the simplices of $Y$ and adjacency in $X$ is determined by incidence in $Y$. Let $\phi\colon X^{(0)}\to\R$ be any function. Let $\dim\colon X^{(0)}\to\R$ be the function sending $\sigma$ (viewed as a vertex of $X$) to $\dim(\sigma)$ (viewed as a simplex of $Y$). If $Y$ is finite dimensional and $\phi(X^{(0)})\subseteq \R$ is closed and discrete (for example this holds if $X$ is finite), then $(\phi,-\dim)\colon X\to \R$ is a Bestvina--Brady discrete Morse function. Indeed, adjacent vertices of $X$ have different $\dim$ values (hence different $(\phi,-\dim)$ values), and the finite dimensionality of $Y$ plus the fact that $\phi(X^{(0)})$ is closed and discrete ensures that the last condition of Definition~\ref{def:bb} is satisfied.
\end{example}

Given a Bestvina--Brady discrete Morse function $(\phi,\psi)\colon X\to\R$, we can deduce topological properties of the sublevel complexes $X^{\phi\le t}$ by analyzing topological properties of the descending links of vertices. Here the \emph{sublevel complex} $X^{\phi\le t}$ for $t\in\R\cup\{\infty\}$ is the full subcomplex of $X$ spanned by vertices $x$ with $\phi(x)\le t$. The \emph{descending link} $\dlk x$ of a vertex $x$ is the space of directions out of $x$ in which $(\phi,\psi)$ decreases in the lexicographic order. More rigorously, since $\phi$ and $\psi$ are affine on simplices and not simultaneously constant on edges, the lexicographic pair $(\phi,\psi)$ achieves its maximum value on a given simplex at a unique vertex of the simplex, called its \emph{top}. The \emph{descending star} $\dst x$ is the subcomplex of $X$ consisting of all simplices with top $x$, and their faces. Then $\dlk x$ is the link of $x$ in $\dst x$.

The claim that an understanding of descending links leads to an understanding of sublevel complexes is made rigorous by the following Morse Lemma. This is essentially \cite[Corollary~2.6]{bestvina97}, and is more precisely spelled out in this form in, e.g., \cite[Corollary~1.11]{zaremsky}.

\begin{lemma}[Morse Lemma]\label{lem:morse}
Let $(\phi,\psi)\colon X\to\R$ be a Bestvina--Brady discrete Morse function on a simplicial complex $X$. Let $t<s$ in $\R\cup\{\infty\}$. If $\dlk x$ is $(n-1)$-connected for all vertices $x$ with $t<\phi(x)\le s$ then the inclusion $X^{\phi\le t}\to X^{\phi\le s}$ induces an isomorphism in $\pi_k$ for all $k\le n-1$, and an epimorphism in $\pi_n$.
\end{lemma}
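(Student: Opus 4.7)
The plan is to build $X^{\phi\le s}$ from $X^{\phi\le t}$ one vertex at a time, adjoining the descending stars $\dst x$ of the vertices $x$ with $t<\phi(x)\le s$ in increasing lexicographic $(\phi,\psi)$-order, and to track the effect of each attachment on $\pi_k$ for $k\le n$.

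The crux of the local analysis is that the descending star $\dst x$ is the cone with apex $x$ over $\dlk x$: every simplex of $\dst x$ with top $x$ has the form $\{x\}*\sigma$ for some $\sigma\subseteq \dlk x$, so $\dst x$ is contractible. Moreover, if $Y_x$ denotes the subcomplex built from $X^{\phi\le t}$ together with the descending stars of all vertices lexicographically below $x$, then $\dst x\cap Y_x=\dlk x$: simplices of $\dst x$ that contain $x$ are absent from $Y_x$ by the ordering, while their proper faces not containing $x$ lie in $\dlk x$ and are present in $Y_x$ because their tops precede $x$ lexicographically. Thus each attachment step is a pushout that cones off $\dlk x$ inside the growing subcomplex.

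A standard fact now handles the homotopy accounting: if $Z$ is an $(n-1)$-connected subcomplex of $Y$, then the inclusion $Y\hookrightarrow Y\cup_Z CZ$ induces an isomorphism on $\pi_k$ for $k\le n-1$ and an epimorphism on $\pi_n$. For $k=1$ this is van Kampen applied to $Y$ and the contractible $CZ$ glued along the connected $Z$, while for $k\ge 2$ it follows from the long exact sequence of the pair together with the excision-style identification $\pi_k(Y\cup CZ, Y)\cong \pi_k(CZ,Z)\cong \pi_{k-1}(Z)$. Composing these stagewise isomorphisms/epimorphisms along the filtration from $X^{\phi\le t}$ up to $X^{\phi\le s}$ yields the claim.

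The main obstacle is organizing this composition when the filtration is not \emph{a priori} finite. In the setting of this paper all relevant complexes are finite and the induction is trivial, but in general one needs precisely the final clause of Definition~\ref{def:bb}, ruling out infinite lexicographically decreasing adjacent sequences with $\phi$-values bounded below. That clause guarantees the order in which the descending stars are attached is well-founded, so the (possibly transfinite) composition of the stagewise arguments is legitimate and commutes with $\pi_k$ in the required range.
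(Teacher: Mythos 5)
The paper does not actually prove this lemma---it is quoted as a citation (essentially \cite[Corollary~2.6]{bestvina97}, in the form of \cite[Corollary~1.11]{zaremsky18})---so there is no in-paper argument to compare against; your proposal reconstructs the standard proof from those sources, and it is correct in the setting the paper needs. The key combinatorial point is right and worth having spelled out: since any vertex $y$ sharing a simplex with $x$ has $(\phi,\psi)(y)\ne(\phi,\psi)(x)$, adding the vertices with $t<\phi\le s$ in increasing lexicographic order makes the new simplices at the stage of $x$ exactly those with top $x$, so $\dst x\cap Y_x=\dlk x$ and each stage is the pushout $Y_x\cup_{\dlk x}(\{x\}*\dlk x)$. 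Two small points deserve tightening. First, the ``excision-style identification'' $\pi_k(Y\cup CZ,Y)\cong\pi_k(CZ,Z)$ is not literally excision (which fails for homotopy groups); the clean route is to observe from the long exact sequence of $(CZ,Z)$ that this pair is $n$-connected when $Z$ is $(n-1)$-connected, replace it up to homotopy rel $Z$ by a relative CW pair with cells of dimension $\ge n+1$, and conclude that $Y\hookrightarrow Y\cup_Z CZ$ is an $n$-equivalence---this is the standard gluing lemma and is also how \cite{bestvina97} argues. Second, in the genuinely infinite case the last clause of Definition~\ref{def:bb} does not by itself well-order the set of $(\phi,\psi)$-values (it only constrains sequences of \emph{adjacent} vertices), so ``attach in increasing order'' needs more care there; but since every complex to which the lemma is applied in this paper is finite, your filtration is finite and this subtlety is moot.
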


Let us return to the special case from Example~\ref{ex:bcsd}, so $X=Y'$ for $Y$ finite dimensional, and $\phi\colon X\to\R$ is closed and discrete on vertices. Given a vertex $\sigma$ in $X$ (i.e., a simplex in $Y$), there are two types of vertex in $\dlk\sigma$: we can either have a face $\sigma^\vee <\sigma$ with $\phi(\sigma^\vee)<\phi(\sigma)$, or a coface $\sigma^\wedge >\sigma$ with $\phi(\sigma^\wedge)\le\phi(\sigma)$. This is because $-\dim$ goes up when passing to faces and down when passing to cofaces. Since every face of $\sigma$ is a face of every coface of $\sigma$, the descending link $\dlk\sigma$ decomposes as join
\[
\dlk\sigma=\dflk\sigma * \dclk\sigma \text{,}
\]
where $\dflk\sigma$, the \emph{descending face link}, is spanned by all $\sigma^\vee <\sigma$ with $\phi(\sigma^\vee)<\phi(\sigma)$, and $\dclk\sigma$, the \emph{descending coface link}, is spanned by all $\sigma^\wedge >\sigma$ with $\phi(\sigma^\wedge)\le\phi(\sigma)$. For example if at least one of $\dflk\sigma$ or $\dclk\sigma$ is contractible, so is $\dlk\sigma$. More generally, an understanding of the topology of $\dflk\sigma$ and $\dclk\sigma$ yields an understanding of the topology of $\dlk\sigma$.

\subsection{Applying Bestvina--Brady Morse theory to the relative generalized Morse complex}\label{sec:bb_on_gen_morse}

Now we will apply Bestvina--Brady Morse theory to $\genmorse(\Delta,\Omega)$. The broad strokes of this strategy are inspired by the Morse theoretic approach in \cite[Proposition~3.6]{bux16} to higher connectivity properties of the matching complex of a complete graph. Let $X=\genmorse(\Delta,\Omega)'$, and let $\phi\colon X^{(0)}\to\N\cup\{0\}$ be the function sending $V$ to the number of simple $V$-cycles (since $\Delta$ is finite, any $V$ only has finitely many simple $V$-cycles). In particular $X^{\phi\le 0}=\morse(\Delta,\Omega)'$, so if we can understand $\dlk V$ for all $V$ with $\phi(V)>0$, using the Bestvina--Brady discrete Morse function $(\phi,-\dim)$ as in Example~\ref{ex:bcsd}, then the Morse Lemma will tell us information about $\morse(\Delta,\Omega)'\cong \morse(\Delta,\Omega)$.

Let us inspect the descending face link.

\begin{lemma}[Descending face link, case 1]\label{lem:dflk_cible}
Let $V\in X^{(0)}$ with $\phi(V)>0$, so $V$ is a discrete vector field on $\Delta$ (avoiding $\Omega$) with at least one $V$-cycle. If there exists a primitive discrete vector field in $V$ that is not contained in any $V$-cycle, then $\dflk V$ is contractible.
\end{lemma}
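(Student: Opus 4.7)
The plan is to identify $\dflk V$ with the order complex of the poset $P \defeq \{V' \mid V' \subsetneq V \text{ and } \phi(V') < \phi(V)\}$, ordered by inclusion (this is the standard description of any descending face link in a barycentric subdivision, as in Example~\ref{ex:bcsd}), and then exhibit a conical contraction of this order complex to the vertex $\{p\}$, where $p \in V$ is the primitive vector field given by hypothesis that lies in no $V$-cycle.

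The crucial observation to establish first is that $\phi(V' \cup \{p\}) = \phi(V')$ for every sub-vector-field $V' \subseteq V$: indeed, any simple cycle of $V' \cup \{p\}$ is in particular a $V$-cycle, so by the choice of $p$ it cannot involve $p$, whence it is already a simple cycle of $V'$. This invariance does all the heavy lifting. In particular $\phi(\{p\}) = 0 < \phi(V)$, so $\{p\}$ is a vertex of $P$; and for any $V' \in P$, the enlarged field $V' \cup \{p\}$ still satisfies $\phi(V' \cup \{p\}) = \phi(V') < \phi(V)$ and $V' \cup \{p\} \ne V$ (otherwise $V' = V \setminus \{p\}$, forcing $\phi(V') = \phi(V \setminus \{p\}) = \phi(V)$ by the same invariance, contradicting $V' \in P$). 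Hence $h \colon V' \mapsto V' \cup \{p\}$ is a well-defined monotone self-map of $P$.

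With $h$ in hand the contraction is routine poset homotopy. One has $V' \le h(V')$ and $\{p\} \le h(V')$ for every $V' \in P$, so the simplicial maps induced on the order complex satisfy $\id \simeq h$ and $h \simeq \mathrm{const}_{\{p\}}$. Chaining these gives a homotopy from the identity of $\dflk V$ to the constant map at $\{p\}$, proving contractibility.

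The only step requiring any real care is verifying that $h$ actually lands in $P$, specifically that $V' \cup \{p\} \subsetneq V$; this is precisely where both the hypothesis on $p$ and the strict inequality $\phi(V') < \phi(V)$ (rather than just $V' \subsetneq V$) come into play. Everything else reduces to the single observation that $p$ is invisible to the cycle-counting function $\phi$.
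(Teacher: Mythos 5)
Your proof is correct and is essentially identical to the paper's: the paper also observes that adjoining the cycle-free primitive vector field $p=(\sigma_0,\tau_0)$ to any $W\in\dflk V$ does not change $\phi$, and then contracts via $W\le W\cup\{p\}\ge\{p\}$, citing Quillen's Section~1.5. Your write-up just makes explicit the two small verifications (the $\phi$-invariance and $W\cup\{p\}\ne V$) that the paper leaves implicit.
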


\begin{proof}
Say $V=\{(\sigma_0,\tau_0),\dots,(\sigma_k,\tau_k)\}$, and say without loss of generality that $(\sigma_0,\tau_0)$ is not contained in any $V$-cycle. Now let $W$ be any vertex of $\dflk V$, so $W$ is a simplex of $\genmorse(\Delta,\Omega)$ with $W<V$ and $\phi(W)<\phi(V)$. Then $\phi(W\cup\{(\sigma_0,\tau_0)\})=\phi(W)<\phi(V)$, so $W\cup\{(\sigma_0,\tau_0)\}\in \dflk V$. Since $W\le W\cup\{(\sigma_0,\tau_0)\}\ge \{(\sigma_0,\tau_0)\}$, \cite[Section~1.5]{quillen78} says $\dflk V$ is contractible.
\end{proof}

\begin{lemma}[Descending face link, case 2]\label{lem:dflk_all}
Let $V\in X^{(0)}$ with $\phi(V)>0$, say $V$ is a $k$-simplex of $\genmorse(\Delta,\Omega)$. If every primitive discrete vector field in $V$ is contained in a $V$-cycle, then $\dflk V$ is homeomorphic to $S^{k-1}$.
\end{lemma}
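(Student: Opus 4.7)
The plan is to show that under the hypothesis of the lemma, every proper face $W$ of $V$ (viewed as a simplex of $\genmorse(\Delta,\Omega)$) satisfies $\phi(W)<\phi(V)$. Granted this, the vertex set of $\dflk V$ consists of \emph{all} proper non-empty subsets of the $(k+1)$-element vertex set of $V$, and its simplices are the chains among them, since $X=\genmorse(\Delta,\Omega)'$ is a barycentric subdivision and simplices of $X$ are chains. Thus $\dflk V$ is the order complex of the poset of proper faces of the $k$-simplex $V$, which is by definition the barycentric subdivision of $\partial V$, and hence homeomorphic to $S^{k-1}$.

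The monotonicity $\phi(W)\le\phi(V)$ is immediate for any $W\subseteq V$, since any simple $W$-cycle uses only primitive pairs in $W\subseteq V$ and is therefore also a simple $V$-cycle. In fact $\phi(V)-\phi(W)$ is exactly the number of simple $V$-cycles that use at least one pair from $V\setminus W$. So it suffices, given a proper face $W\subsetneq V$ and any chosen $(\sigma_i,\tau_i)\in V\setminus W$, to exhibit a \emph{simple} $V$-cycle containing $(\sigma_i,\tau_i)$, since such a cycle is not a $W$-cycle.

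By hypothesis $(\sigma_i,\tau_i)$ lies in some $V$-cycle $C$, and the main step is the standard reduction of $C$ to a simple $V$-cycle that still contains $(\sigma_i,\tau_i)$: since each simplex of $\Delta$ is in at most one pair of $V$, any repetition $\sigma_j=\sigma_{j'}$ in $C$ (with $j<j'$) forces $\tau_j=\tau_{j'}$, and one checks that $C$ then splits cleanly into two shorter closed $V$-paths, one using the indices in $[j,j'-1]$ and the other using the remaining indices; choosing whichever piece contains the index $i$ and iterating yields the desired simple $V$-cycle through $(\sigma_i,\tau_i)$. This cycle-simplification is the only step that is not pure bookkeeping, and it is the one I would write out most carefully; the rest is combinatorics of the face poset of $V$ together with the observation that passing from $V$ to a subset can only destroy $V$-cycles, never create new ones.
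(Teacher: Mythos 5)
Your proof is correct and follows essentially the same route as the paper: show that every proper face $W<V$ satisfies $\phi(W)<\phi(V)$, so that $\dflk V$ is the (barycentric subdivision of the) full boundary $\partial V\cong S^{k-1}$. The only difference is that you spell out the reduction of an arbitrary $V$-cycle through a given pair to a \emph{simple} $V$-cycle through that pair (needed because $\phi$ counts simple cycles), a step the paper leaves implicit; your splitting argument for that reduction is correct.
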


\begin{proof}
The hypothesis ensures that $\phi(W)<\phi(V)$ for every proper face $W<V$, i.e., removing any part of $V$ eliminates at least one $V$-cycle (note that removing part of $V$ cannot create new cycles, so these are in fact equivalent). Hence $\dflk V$ is homeomorphic to the boundary of $V$ (viewed as a simplex in $\genmorse(\Delta,\Omega)$), so homeomorphic to $S^{k-1}$.
\end{proof}

At this point we know that the descending link of a $k$-simplex $V$ with $\phi(V)>0$ is either contractible, or else is the join of $S^{k-1}$ with $\dclk V$ (so the $k$-fold suspension of $\dclk V$). It remains to analyze $\dclk V$. In Section~\ref{sec:graphs} we will discuss the case when $\dim(\Delta)=1$, where it turns out we can fully analyze $\dclk V$. Then in Section~\ref{sec:hi_dim} we will consider arbitrary $\Delta$, where at least we will be able to tell when $\dclk V$ is non-empty.

\section{Graphs}\label{sec:graphs}

In the special case when $\dim(\Delta)=1$, i.e., $\Delta=\Gamma$ is a graph, the descending coface link of those $V$ satisfying the hypotheses of Lemma~\ref{lem:dflk_all} can be related to a ``smaller'' Morse complex (see Proposition~\ref{prop:dclk_iso}), which allows for inductive arguments. Throughout this section $\Gamma$ denotes a finite graph, and $\Omega$ is a subset of the set of simplices of $\Gamma$. To us ``graph'' will always mean a $1$-dimensional simplicial complex, often called a ``simple graph''.

\begin{proposition}[Modeling the descending coface link]\label{prop:dclk_iso}
Let $V$ be a $k$-simplex in $\genmorse(\Gamma,\Omega)$ such that every primitive discrete vector field in $V$ lies in a $V$-cycle. Let $\Upsilon$ be the set of simplices of $\Gamma$ used by $V$. Then $\dclk V$ in $X$ is isomorphic to $\morse(\Gamma,\Omega\cup\Upsilon)'$.
\end{proposition}

\begin{proof}
Define a simplicial map $\psi\colon \dclk V\to \morse(\Gamma,\Omega\cup\Upsilon)'$ as follows. A vertex of $\dclk V$ is a discrete vector field on $\Gamma$ (avoiding $\Omega$) of the form $V\sqcup W$ for non-trivial $W$ such that any $V\sqcup W$-cycle is a $V$-cycle. In particular $W$ is acyclic, and so $W\in \morse(\Gamma,\Omega\cup\Upsilon)$. Setting $\psi\colon (V\sqcup W)\mapsto W$ gives a well defined map on the level of vertices. If $V\sqcup W < V\sqcup W'$ then $W<W'$, so this extends to a simplicial map $\psi\colon \dclk V\to \morse(\Gamma,\Omega\cup\Upsilon)'$. Now we have to show $\psi$ is bijective. It is clearly injective, since $W=W'$ implies $V\sqcup W = V\sqcup W'$. It is also clear that as long as $\psi$ is surjective on vertices, it will be surjective. To see it is surjective on vertices, let $W$ be a vertex in $\morse(\Gamma,\Omega\cup\Upsilon)'$, and we have to show that any $V\sqcup W$-cycle is a $V$-cycle, since then $V\sqcup W$ will be a vertex in $\dclk V$. Note that for any primitive discrete vector field $(v,e)$ in $V$, our assumptions say that $(v,e)$ lies in a $V$-cycle. Since any $V$-cycle is also a $V\sqcup W$-cycle, Lemma~\ref{lem:one_cycle} says $(v,e)$ cannot lie in any $V\sqcup W$-cycles other than this one. Hence any $V\sqcup W$-cycle that contains a primitive discrete vector field in $V$ must be completely contained in $V$. Finally, note that any non-trivial $V\sqcup W$-cycle cannot be fully contained in $W$ since $W$ is acyclic. We conclude that any $V\sqcup W$-cycle is a $V$-cycle.
\end{proof}

We reiterate that the analog of Lemma~\ref{lem:one_cycle} is not true for simplicial complexes of dimension greater than $1$, so this proof does not work outside the graph case.

\begin{proposition}\label{prop:main_gen_graphs}
The Morse complex $\morse(\Gamma,\Omega)$ is $\left(\left\lceil\frac{h(\Gamma,\Omega)}{2d(\Gamma,\Omega)}\right\rceil-2\right)$-connected.
\end{proposition}

\begin{proof}
We induct on $h(\Gamma,\Omega)$. The base case is that $\morse(\Gamma,\Omega)$ is non-empty once $h(\Gamma,\Omega)>0$, which is clear. Now assume $h(\Gamma,\Omega)>2d(\Gamma,\Omega)$. By the Morse Lemma~\ref{lem:morse} and Proposition~\ref{prop:genmorse_hi_conn}, it suffices to show that for $V$ a $k$-simplex in $\genmorse(\Gamma,\Omega)$ with $\phi(V)>0$, the descending link $\dlk V$ is $\left(\left\lceil\frac{h(\Gamma,\Omega)}{2d(\Gamma,\Omega)}\right\rceil-2\right)$-connected. If there exists a primitive discrete vector field in $V$ that is not contained in any $V$-cycle, then $\dflk V$ (and hence $\dlk V$) is contractible by Lemma~\ref{lem:dflk_cible}. Now assume every primitive discrete vector field in $V$ is contained in a $V$-cycle. Then by Lemma~\ref{lem:dflk_all}, $\dflk V \cong S^{k-1}$, and by Proposition~\ref{prop:dclk_iso}, $\dclk V\cong \morse(\Gamma,\Omega\cup\Upsilon)$, where $\Upsilon$ is the set of simplices used in $V$. Since $\dlk V = \dflk V * \dclk V$, it now suffices to show that $\morse(\Gamma,\Omega\cup\Upsilon)$ is $\left(\left\lceil\frac{h(\Gamma,\Omega)}{2d(\Gamma,\Omega)}\right\rceil-k-2\right)$-connected. By induction $\morse(\Gamma,\Omega\cup\Upsilon)$ is $\left(\left\lceil\frac{h(\Gamma,\Omega\cup\Upsilon)}{2d(\Gamma,\Omega\cup\Upsilon)}\right\rceil-2\right)$-connected. Note that $h(\Gamma,\Omega\cup\Upsilon)\ge h(\Gamma,\Omega)-((k+1)(d(\Gamma,\Omega)+1)-1)$. This is because removing $k+1$ edges and their endpoints and all their incident edges would normally remove at most $(k+1)(d(\Gamma,\Omega)+1)$ total edges from $\hasse(\Gamma,\Omega)$ (here we use the fact that a vertex of $\hasse(\Gamma,\Omega)$ representing an edge of $\Gamma$ has degree at most $2$), but since $V$ has at least one cycle we know that we only removed at most $(k+1)(d(\Gamma,\Omega)+1)-1$ total edges. Also, $d(\Gamma,\Omega\cup\Upsilon)\le d(\Gamma,\Omega)$, so $\morse(\Gamma,\Omega\cup\Upsilon)$ is $\left(\left\lceil\frac{h(\Gamma,\Omega)-((k+1)(d(\Gamma,\Omega)+1)-1)}{2d(\Gamma,\Omega)}\right\rceil-2\right)$-connected.  Since $\phi(V)>0$ we know $k\ge 2$ by Observation~\ref{obs:1-skel}. Also, if $d(\Gamma,\Omega)=1$ then $\morse(\Gamma,\Omega)=\genmorse(\Gamma,\Omega)$ and we are done, so we can assume $d(\Gamma,\Omega)\ge 2$. Putting all this together we compute:
\begin{align*}
\left(\left\lceil\frac{h(\Gamma,\Omega)-((k+1)(d(\Gamma,\Omega)+1)-1)}{2d(\Gamma,\Omega)}\right\rceil-2\right) &= \\
\left(\left\lceil\frac{h(\Gamma,\Omega)-(kd(\Gamma,\Omega)+k+d(\Gamma,\Omega))}{2d(\Gamma,\Omega)}\right\rceil-2\right) &\ge \\
\left(\left\lceil\frac{h(\Gamma,\Omega)-2kd(\Gamma,\Omega)}{2d(\Gamma,\Omega)}\right\rceil-2\right) &= \\
\left(\left\lceil\frac{h(\Gamma,\Omega)}{2d(\Gamma,\Omega)}\right\rfloor-k-2\right)
\end{align*}
and we are done.
\end{proof}

In the special case where $\Omega=\emptyset$, we can now draw conclusions about $\morse(\Gamma)$. Let us write $h(\Gamma)=h(\Gamma,\emptyset)$ and $d(\Gamma)=d(\Gamma,\emptyset)$, so $h(\Gamma)=2|E(\Gamma)|$ and $d(\Delta)$ is the maximum degree of a vertex in the Hasse diagram (which is usually the same as the maximum degree of a vertex in $\Gamma$, unless every vertex of $\Gamma$ has degree $0$ or $1$).

\begin{theorem}\label{thrm:main_graphs}
The Morse complex $\morse(\Gamma)$ is $\left(\left\lceil\frac{|E(\Gamma)|}{d(\Gamma)}\right\rceil-2\right)$-connected. In particular $\morse(\Gamma)$ is connected once $|E(\Gamma)|>d(\Gamma)$, simply connected once $|E(\Gamma)|>2d(\Gamma)$, and $(m-1)$-connected once $|E(\Gamma)|>m\cdot d(\Gamma)$.
\end{theorem}

\begin{proof}
By Proposition~\ref{prop:main_gen_graphs} $\morse(\Gamma)$ is $\left(\left\lceil\frac{2|E(\Gamma)|}{2d(\Gamma)}\right\rceil-2\right)$-connected, i.e., $\left(\left\lceil\frac{|E(\Gamma)|}{d(\Gamma)}\right\rceil-2\right)$-connected.
\end{proof}

This proves Conjecture~\ref{conj:higher} when $\dim(\Delta)=1$. Combining this with Theorem~\ref{thrm:1_ground_morse} we can obtain a higher connectivity bound that only depends on $|E(\Gamma)|$. Let $\eta(\Gamma)\defeq \left\lceil\sqrt{|E(\Gamma)|}\right \rceil$.

\begin{corollary}\label{cor:number_of_edges}
The Morse complex $\morse(\Gamma)$ is $(\eta(\Gamma)-2)$-connected. In particular $\morse(\Gamma)$ is connected once $|E(\Gamma)|>1$, simply connected once $|E(\Gamma)|>4$, and $(m-1)$-connected once $|E(\Gamma)|>m^2$.
\end{corollary}

\begin{proof}
If $\Gamma$ has no vertices of degree $d(\Gamma)$ then $\Gamma$ is a disjoint union of edges, and so is $(|E(\Gamma)|-2)$-connected, hence $(\eta(\Gamma)-2)$-connected. Now assume $\Gamma$ has a vertex of degree $d(\Gamma)$. By Theorem~\ref{thrm:1_ground_morse}, $\morse(\Gamma)$ is $(d(\Gamma)-2)$-connected. If $d(\Gamma)\ge \eta(\Gamma)$ then we are done, so assume $d(\Gamma)\le \eta(\Gamma)-1$. Then $\left\lceil\frac{|E(\Gamma)|}{d(\Gamma)}\right\rceil \ge \left\lceil\frac{|E(\Gamma)|}{\eta(\Gamma)-1}\right\rceil \ge \left\lceil\frac{|E(\Gamma)|}{\sqrt{|E(\Gamma)|}}\right\rceil = \eta(\Gamma)$, and so we are done by Theorem~\ref{thrm:main_graphs}.
\end{proof}

\subsection{Examples}\label{sec:graph_examples}

Now we discuss a couple of examples. First let us discuss an example where the homotopy type of $\morse(\Gamma)$ is already known, namely when $\Gamma$ is a complete graph. This example will show that, while our results are powerful in that they apply to any $\Gamma$, they do not necessarily yield optimal bounds.

\begin{example}
Let $K_n$ be the complete graph on $n$ vertices, so $|E(K_n)|=\binom{n}{2}$. By Corollary~\ref{cor:number_of_edges} $\morse(K_n)$ is $(m-1)$-connected once $\binom{n}{2}>m^2$, i.e., once $n>(1+\sqrt{1+8m^2})/2$. For example it is connected once $n>2$ and simply connected once $n>3$. Kozlov computed the homotopy type $\morse(K_n)$, namely $\morse(K_n)$ is homotopy equivalent to a wedge of spheres of dimension $n-2$ \cite[Theorem~3.1]{kozlov99}, so in fact $\morse(K_n)$ is already $(m-1)$-connected once $n>m+1$. This shows our bounds are not always optimal.
\end{example}

As a remark, Kozlov also computed the homotopy type of $\morse(C_n)$ \cite[Proposition~5.2]{kozlov99} for $C_n$ the $n$-cycle graph. Since $|E(C_n)|=n$ it is easy to compare our higher connectivity bounds to the actual higher connectivity, and again we see our bounds are not optimal.

Now we discuss an example where the homotopy type of $\morse(\Gamma)$ is (to the best of our knowledge) not known, namely when $\Gamma$ is complete bipartite, and compute what our results reveal.

\begin{example}
Let $K_{p,q}$ be the complete bipartite graph with $p$ vertices of one type and $q$ vertices of the other type, so $|E(K_{p,q})|=pq$. By Corollary~\ref{cor:number_of_edges} $\morse(K_{p,q})$ is $(m-1)$-connected once $pq>m^2$. For example it is connected once $pq>1$ and simply connected once $pq>4$. Later in Theorem~\ref{thrm:classification} we will see that actually it is also simply connected once $pq>1$, i.e., every $\morse(K_{p,q})$ is simply connected except $\morse(K_{1,1})$, which is not even connected.
\end{example}

\section{Higher dimensional $\Delta$}\label{sec:hi_dim}

Now we consider arbitrary dimensional $\Delta$, and prove some results about higher connectivity properties of $\morse(\Delta)$. First we observe that $\morse(\Delta)$ gets arbitrarily highly connected as $\dim(\Delta)$ goes to $\infty$.

\begin{theorem}\label{thrm:hi_dim}
Suppose that $\Delta$ contains a $k$-simplex. Then $\morse(\Delta)$ is $(k-2)$-connected.
\end{theorem}

\begin{proof}
Since $\Delta$ contains a $k$-simplex, $\Delta^{(1)}$ contains a vertex of degree $k$. The result is now immediate from Theorem~\ref{thrm:1_ground_morse}.
\end{proof}

Our next goal is to completely classify when $\morse(\Delta)$ is connected and simply connected. To do this we will first prove that $\morse(\Delta)$ is (simply) connected if and only if $\genmorse(\Delta)$ is, for which we will use Bestvina--Brady Morse theory applied to $X=\genmorse(\Delta)'$, as in Section~\ref{sec:graphs}. The key is that, even without a full understanding of $\dclk V$ in the $\dim(\Delta)>1$ case, we will only need to care that $\dclk V$ is non-empty. Bestvina--Brady Morse theory is probably a more powerful tool than necessary to relate $\morse(\Delta)$ to $\genmorse(\Delta)$ in this way, but it makes for an elegant argument.

\begin{lemma}[Descending link simply connected]\label{lem:dlk_cimp_conn}
Assume $\Delta$ is not a $2$-simplex or a $3$-cycle. Then for any $V\in X^{(0)}$ with $\phi(V)>0$, either $\dflk V$ is simply connected or $\dflk V$ is connected and $\dclk V$ is non-empty. In particular, the descending link $\dlk V$ is always simply connected.
\end{lemma}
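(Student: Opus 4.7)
The plan is to combine Lemmas~\ref{lem:dflk_cible} and~\ref{lem:dflk_all} with the join decomposition $\dlk V = \dflk V * \dclk V$. If some primitive vector field in $V$ fails to lie in any $V$-cycle, Lemma~\ref{lem:dflk_cible} immediately yields that $\dflk V$ is contractible and hence simply connected, giving the first alternative of the statement. Otherwise Lemma~\ref{lem:dflk_all} gives $\dflk V \cong S^{k-1}$, where $V$ is a $k$-simplex of $\genmorse(\Delta)$.

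In this second case I would first observe that $k \ge 2$. Indeed, any $V$-cycle has length at least $2$ by the condition $\sigma_{i+1} \ne \sigma_i$, and a $V$-cycle of length $2$ would force two distinct $(p{+}1)$-simplices $\tau_0, \tau_1$ to both contain two distinct $p$-faces $\sigma_0, \sigma_1$; but then $\tau_0 = \sigma_0 \cup \sigma_1 = \tau_1$, a contradiction. Thus the shortest $V$-cycle uses at least three primitive vector fields, so $k \ge 2$. If $k \ge 3$, then $\dflk V \cong S^{k-1}$ is already simply connected. The delicate case is $k = 2$, where $\dflk V \cong S^1$ is merely connected; there I must exhibit a vertex of $\dclk V$, i.e., a primitive vector field $(\sigma, \tau)$ compatible with $V$ and such that $\phi(V \cup \{(\sigma, \tau)\}) \le \phi(V)$.

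For $k = 2$, write $V = \{(\sigma_0, \tau_0), (\sigma_1, \tau_1), (\sigma_2, \tau_2)\}$ with $\tau_i > \sigma_i, \sigma_{i+1}$ (indices mod~$3$), each $\sigma_i$ of dimension $p$ and each $\tau_i$ of dimension $p{+}1$. The plan is to produce a primitive vector field whose two simplices both lie outside these six. The key observation is that every $V$-path is confined to a single pair of adjacent dimensions, so any new primitive vector field whose dimensions differ from $(p, p{+}1)$ cannot combine with $V$'s existing pairs to form a new cycle; adding such a pair automatically preserves $\phi$. For $p \ge 1$, each $(p{+}1)$-simplex $\tau_i$ drags into $\Delta$ faces of dimensions $\le p-1$ that do not appear among the six simplices of $V$, and from these one readily builds a primitive vector field in strictly lower dimensions. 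For $p = 0$, the six simplices of $V$ form a copy of $\partial\Delta^2$ inside $\Delta$; the hypothesis that $\Delta$ is neither $\partial\Delta^2$ nor $\Delta^2$ then guarantees a simplex $\rho$ of $\Delta$ outside this scaffold (and outside the possibly-present triangle $\{\sigma_0, \sigma_1, \sigma_2\}$), from which a compatible primitive vector field is extracted by a short case analysis on $\dim \rho$ (e.g., any extra edge has an endpoint outside $\{\sigma_0, \sigma_1, \sigma_2\}$, since otherwise it would already equal one of the $\tau_i$).

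The hard part will be the $p = 0$ case, where the scaffold already essentially exhausts $\partial\Delta^2$ and one must carefully use the hypothesis on $\Delta$ to name an explicit primitive vector field. Once $\dclk V$ is shown to be non-empty (with $\dflk V$ connected) or $\dflk V$ is already simply connected, the ``in particular'' clause follows from the join decomposition, since the join of a connected space with a non-empty space is simply connected.
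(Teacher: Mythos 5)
Your proposal follows the paper's own proof essentially step for step: contractibility of $\dflk V$ via Lemma~\ref{lem:dflk_cible}, the identification $\dflk V\cong S^{k-1}$ via Lemma~\ref{lem:dflk_all}, the observation that $\phi(V)>0$ forces $k\ge 2$, and, for $k=2$, the construction of an explicit vertex of $\dclk V$ split into the cases $p\ge 1$ and $p=0$, with the excluded complexes $\Delta^2$ and $\partial\Delta^2$ appearing exactly where the paper uses them.

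The one place your sketch would not survive being written out is the sub-case $p=1$ of your ``$p\ge 1$'' step. There the faces of $\tau_0$ of dimension $\le p-1=0$ are just vertices, and no primitive discrete vector field can be assembled from simplices of dimension $\le 0$ alone; you are forced to pair a vertex with a $1$-face of $\tau_0$, and a $1$-face has dimension $p$, so it could a priori coincide with one of the $\sigma_i$. The missing observation is that the third edge of the triangle $\tau_0$ (besides $\sigma_0$ and $\sigma_1$) cannot equal $\sigma_2$: a $2$-simplex containing the two distinct edges $\sigma_1$ and $\sigma_2$ is determined by them and would have to be $\tau_1\ne\tau_0$. With that edge in hand your ``different dimensions implies no new cycles'' remark applies, since the new pair lives in dimensions $(0,1)$ against $(1,2)$. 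Two smaller points: in the $p=0$ case the new pair $(v,e)$ has the \emph{same} dimensions as the pairs of $V$, so the dimension argument is silent and you need the separate one-line check that no simple cycle can close up at $v$ (the only matched edge containing $v$ is $e$ itself); and your simplex $\rho$ outside the scaffold could be an isolated vertex, which yields no primitive vector field --- the paper dismisses this by noting that isolated vertices may be deleted without changing $\morse(\Delta)$, and your write-up should do the same.
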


\begin{proof}
Say $V$ is a $k$-simplex of $\genmorse(\Delta)$. The fact that $\phi(V)>0$ implies $k\ge 2$, by Observation~\ref{obs:1-skel}. We see from Lemmas~\ref{lem:dflk_cible} and~\ref{lem:dflk_all} that $\dflk V$ is either contractible or homeomorphic to $S^{k-1}$. If $k\ge 3$ this is simply connected. Now we have to prove that if $k=2$ then $\dclk V$ is non-empty.

Say $V=\{(\sigma_0,\tau_0),(\sigma_1,\tau_1),(\sigma_2,\tau_2)\}$. Since $\phi(V)>0$ we know that $\sigma_0,\tau_0,\sigma_1,\tau_1,\sigma_2,\tau_2,\sigma_0$ is a $V$-cycle, so the $\sigma_i$ all have the same dimension, say $p$, and the $\tau_i$ all have dimension $p+1$. First suppose $p>0$. Then $\dim(\tau_0)\ge 2$, so we can choose a $1$-face $e<\tau_0$ and $0$-face $v<e$ such that $e$ is disjoint from $\sigma_0$ and $\sigma_1$. In particular $V\sqcup \{(v,e)\}$ is a discrete vector field, and it is clear that $\phi(V\sqcup \{(v,e)\})=\phi(V)$, so $\dclk V\ne\emptyset$.

Now suppose $p=0$, so the $\sigma_i$ are vertices and the $\tau_i$ are edges. If $\Delta$ contains an edge $e$ not equal to any $\tau_i$ then $e$ must have at least one vertex $v$ not equal to any $\sigma_i$. In this case $V\sqcup \{(v,e)\}$ is a discrete vector field, and it is clear that $\phi(V\sqcup \{(v,e)\})=\phi(V)$, so $\dclk V\ne\emptyset$. Finally, suppose $\Delta$ does not contain any edges besides the $\tau_i$. Since isolated vertices do not contribute to the Morse complex we can assume $\Delta$ has none, so the only options are that $\Delta$ equals a $2$-simplex or a $3$-cycle, but we ruled these out.
\end{proof}

\begin{corollary}\label{cor:genmorse_same_as_morse}
We have that $\morse(\Delta)$ is connected if and only if $\genmorse(\Delta)$ is connected, and $\morse(\Delta)$ is simply connected if and only if $\genmorse(\Delta)$ is simply connected.
\end{corollary}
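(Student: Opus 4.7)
The plan is to deduce this directly from the Bestvina--Brady Morse theoretic setup already in place, combined with the descending link computation of Lemma~\ref{lem:dlk_cimp_conn}.

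First I would dispose of the two exceptional cases. If $\Delta$ is $\Delta^2$ or $\partial \Delta^2$, then Examples~\ref{ex:2_spx} and~\ref{ex:bdry_2_spx} already show that both $\morse(\Delta)$ and $\genmorse(\Delta)$ are connected but not simply connected (each is homotopy equivalent to a wedge of circles). So the ``if and only if'' statements hold trivially in these cases, since the truth values match on both sides.

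For all remaining $\Delta$, I would apply Bestvina--Brady Morse theory to $X = \genmorse(\Delta)'$, using the function $(\phi, -\dim) \colon X \to \R \times \R$ from Example~\ref{ex:bcsd}, where $\phi(V)$ counts the simple $V$-cycles. Since $\Delta$ is finite, $\phi$ has discrete image, and $\genmorse(\Delta)$ is finite dimensional, so Example~\ref{ex:bcsd} confirms this is a genuine Bestvina--Brady discrete Morse function. The sublevel complex $X^{\phi \le 0}$ is precisely $\morse(\Delta)'$, while $X^{\phi \le \infty} = X = \genmorse(\Delta)'$. Passing to (or from) barycentric subdivisions does not change homotopy type, so I can identify these with $\morse(\Delta)$ and $\genmorse(\Delta)$ for the purposes of computing $\pi_0$ and $\pi_1$.

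Now, for every vertex $V$ with $\phi(V) > 0$, Lemma~\ref{lem:dlk_cimp_conn} (applicable because we have excluded the two exceptional complexes, and here $\Omega = \emptyset$) tells us that $\dlk V$ is simply connected, i.e., $1$-connected. I would then invoke the Morse Lemma (Lemma~\ref{lem:morse}) with $n = 2$, $t = 0$, and $s = \infty$: the inclusion $\morse(\Delta) \hookrightarrow \genmorse(\Delta)$ induces isomorphisms on $\pi_k$ for $k \le 1$ (and an epimorphism on $\pi_2$, which we do not need). In particular the inclusion is a bijection on $\pi_0$ and an isomorphism on $\pi_1$, which immediately gives both equivalences in the statement.

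There is no real obstacle here: the hard input, Lemma~\ref{lem:dlk_cimp_conn}, is already proven, and the only subtlety is remembering to treat $\Delta^2$ and $\partial\Delta^2$ separately by hand, which is essentially already done in the stated examples. The proof is thus a short assembly of existing pieces.
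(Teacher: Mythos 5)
Your proposal is correct and follows essentially the same route as the paper's proof: handle $\Delta^2$ and $\partial\Delta^2$ via the two examples, then use Lemma~\ref{lem:dlk_cimp_conn} to get simply connected descending links and apply the Morse Lemma to the inclusion $\morse(\Delta)\to\genmorse(\Delta)$. The extra details you spell out (the explicit choice of $(\phi,-\dim)$, $t=0$, $s=\infty$, $n=2$) are exactly the setup the paper establishes just before the lemmas, so nothing is missing.
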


\begin{proof}
First note that $\morse(\Delta)^{(1)}=\genmorse(\Delta)^{(1)}$ by Observation~\ref{obs:1-skel}, so the connectivity result is true. Now we prove the simple connectivity result. If $\Delta$ is a $2$-simplex or a $3$-cycle, then Examples~\ref{ex:3_cycle} and~\ref{ex:2_spx} show that the result holds. Now assume $\Delta$ is neither of these. By Lemma~\ref{lem:dlk_cimp_conn} the descending link of every $V$ with $\phi(V)>0$ is simply connected. Thus by the Morse Lemma~\ref{lem:morse}, the inclusion $\morse(\Delta)\to\genmorse(\Delta)$ induces an isomorphism in $\pi_1$.
\end{proof}

Now we can completely classify when $\morse(\Delta)$ is connected and simply connected.

\begin{theorem}\label{thrm:classification}
Suppose $\Delta$ has no isolated vertices. The Morse complex $\morse(\Delta)$ is connected if and only if $\Delta$ is not an edge, and is simply connected if and only if $\Delta$ is none of: an edge, a disjoint union of two edges, a path with three edges, a $3$-cycle, or a $2$-simplex.
\end{theorem}

\begin{proof}
First we prove the connectivity statement. If $\Delta^{(1)}$ has a vertex with degree more than $1$ then $\genmorse(\Delta)$, and hence $\morse(\Delta)$, is connected by Lemma~\ref{lem:1_ground} and Corollary~\ref{cor:genmorse_same_as_morse}. Now assume $\Delta^{(1)}$ has no vertices with degree more than $1$, so $\Delta$ is a disjoint union of edges. If $\Delta$ has at least two edges (or, for trivial reasons, zero edges) then it is easy to check that $\morse(\Delta)$ is connected. If $\Delta$ has one edge then $\morse(\Delta)=S^0$ is not connected.

Now we prove the simple connectivity statement. If $\Delta^{(1)}$ has a vertex with degree more than $2$ then $\genmorse(\Delta)$, and hence $\morse(\Delta)$, is simply connected by Lemma~\ref{lem:1_ground} and Corollary~\ref{cor:genmorse_same_as_morse}. Now assume $\Delta^{(1)}$ has no vertices with degree more than $2$. Then $\Delta$ is a disjoint union of some number of $2$-simplices, cycle graphs, and path graphs. If $\Delta$ has more than one connected component, and is not a disjoint union of two edges, then $\morse(\Delta)$ is a join of at least two non-empty complexes, at least one of which is connected, and so $\morse(\Delta)$ is simply connected. If $\Delta$ is a disjoint union of two edges then $\morse(\Delta)\simeq S^1$ is not simply connected. Now assume $\Delta$ is connected. If it is a $2$-simplex then $\morse(\Delta)$ is not simply connected (Example~\ref{ex:2_spx}). If $\Delta$ is an $n$-cycle then $\morse(\Delta)$ is simply connected unless $n=3$ \cite[Proposition~5.2]{kozlov99}. If $\Delta$ is a path with $n$ edges then $\genmorse(\Delta)$ is the matching complex of a path with $2n$ edges, which is easily seen to be simply connected unless $n$ is $1$ or $3$, so the same is true of $\morse(\Delta)$ by Corollary~\ref{cor:genmorse_same_as_morse}.
\end{proof}

A consequence of Theorem~\ref{thrm:classification} is that we can now verify the connectivity and simple connectivity cases of Conjecture~\ref{conj:higher}. Let us use the notation $h(\Delta)=h(\Delta,\emptyset)$ and $d(\Delta)=d(\Delta,\emptyset)$ as before, so $h(\Delta)$ is the number of edges in the Hasse diagram of $\Delta$ and $d(\Delta)$ is the maximum degree of a vertex in the Hasse diagram.

\begin{corollary}\label{cor:simp_conn}
If $h(\Delta)>2d(\Delta)$ then $\morse(\Delta)$ is connected. If $h(\Delta)>4d(\Delta)$ then $\morse(\Delta)$ is simply connected.
\end{corollary}

\begin{proof}
Since isolated vertices do not contribute to $h(\Delta)$, $d(\Delta)$, or $\morse(\Delta)$, we can assume there are none. If $\morse(\Delta)$ is not connected then $\Delta$ is an edge by Theorem~\ref{thrm:classification}, so $h(\Delta)=2<4=2d(\Delta)$. If $\morse(\Delta)$ is not simply connected then $\Delta$ is either an edge, a disjoint union of two edges, a path with three edges, a $3$-cycle, or a $2$-simplex. In all these cases one can compute that $h(\Delta)\le 4d(\Delta)$.
\end{proof}

\bibliographystyle{alpha}

\begin{thebibliography}{BFM{\etalchar{+}}16}

\bibitem[BB97]{bestvina97}
Mladen Bestvina and Noel Brady.
\newblock Morse theory and finiteness properties of groups.
\newblock {\em Invent. Math.}, 129(3):445--470, 1997.

\bibitem[BF19]{belk19}
James Belk and Bradley Forrest.
\newblock Rearrangement groups of fractals.
\newblock {\em Trans. Amer. Math. Soc.}, 372(7):4509--4552, 2019.

\bibitem[BFM{\etalchar{+}}16]{bux16}
Kai-Uwe Bux, Martin~G. Fluch, Marco Marschler, Stefan Witzel, and Matthew C.~B.
  Zaremsky.
\newblock The braided {T}hompson's groups are of type {$\rm F_\infty$}.
\newblock {\em J. Reine Angew. Math.}, 718:59--101, 2016.
\newblock With an appendix by Zaremsky.

\bibitem[BGM20]{bayer20}
Margaret Bayer, Bennet Goeckner, and Marija~Jeli\'{c} Milutinovi\'{c}.
\newblock Manifold matching complexes.
\newblock {\em Mathematika}, 66(4):973--1002, 2020.

\bibitem[BLV{\v{Z}}94]{bjoerner94}
A.~Bj{\"o}rner, L.~Lov{\'a}sz, S.~T. Vre{\'c}ica, and R.~T.
  {\v{Z}}ivaljevi{\'c}, \emph{Chessboard complexes and matching complexes}, J.
  London Math. Soc. (2) \textbf{49} (1994), no.~1, 25--39.

\bibitem[CJ05]{chari05}
Manoj~K. Chari and Michael Joswig.
\newblock Complexes of discrete {M}orse functions.
\newblock {\em Discrete Math.}, 302(1-3):39--51, 2005.

\bibitem[CM17]{capitelli17}
Nicolas~Ariel Capitelli and Elias~Gabriel Minian.
\newblock A simplicial complex is uniquely determined by its set of discrete
  {M}orse functions.
\newblock {\em Discrete Comput. Geom.}, 58(1):144--157, 2017.

\bibitem[For98]{forman98}
Robin Forman.
\newblock Morse theory for cell complexes.
\newblock {\em Adv. Math.}, 134(1):90--145, 1998.

\bibitem[Koz99]{kozlov99}
Dmitry~N. Kozlov.
\newblock Complexes of directed trees.
\newblock {\em J. Combin. Theory Ser. A}, 88(1):112--122, 1999.

\bibitem[Koz08]{kozlov08}
Dmitry Kozlov.
\newblock {\em Combinatorial algebraic topology}, volume~21 of {\em Algorithms
  and Computation in Mathematics}.
\newblock Springer, Berlin, 2008.

\bibitem[LS21]{lin21}
Maxwell Lin and Nicholas~A. Scoville.
\newblock On the automorphism group of the {M}orse complex.
\newblock {\em Adv. in Appl. Math.}, 131:Paper No. 102250, 17, 2021.

\bibitem[PdM82]{Palis82}
Jacob Palis, Jr. and Welington de~Melo.
\newblock {\em Geometric theory of dynamical systems}.
\newblock Springer-Verlag, New York-Berlin, 1982.
\newblock An introduction, Translated from the Portuguese by A. K. Manning.

\bibitem[Qui78]{quillen78}
Daniel Quillen.
\newblock Homotopy properties of the poset of nontrivial {$p$}-subgroups of a
  group.
\newblock {\em Adv. in Math.}, 28(2):101--128, 1978.

\bibitem[Sco19]{scoville19}
Nicholas~A. Scoville.
\newblock {\em Discrete {M}orse theory}, volume~90 of {\em Student Mathematical
  Library}.
\newblock American Mathematical Society, Providence, RI, 2019.

\bibitem[SWZ19]{skipper19}
Rachel Skipper, Stefan Witzel, and Matthew C.~B. Zaremsky.
\newblock Simple groups separated by finiteness properties.
\newblock {\em Invent. Math.}, 215(2):713--740, 2019.

\bibitem[Zar]{zaremsky}
Matthew C.~B. Zaremsky.
\newblock Bestvina--{B}rady discrete {M}orse theory and {V}ietoris--{R}ips
  complexes.
\newblock {\em Amer. J. Math.}
\newblock To appear. arXiv:1812.10976.

\end{thebibliography}
\newcommand{\etalchar}[1]{$^{#1}$}

\end{document}